\renewcommand{\leq}{\leqslant}
\renewcommand{\geq}{\geqslant}
\makeatletter \numberwithin{equation}{section}
\numberwithin{figure}{section} %% Comment out for sequentially-numbered
\theoremstyle{plain}
\newtheorem{thm}{Theorem}[section]
\newtheorem{lem}[thm]{Lemma} %%Delete [teo] to re-start numbering
\theoremstyle{Definition}
\begin{document}
\title{ Stancu type generalization of
the $q-$Favard-Sz\`{a}sz operators }
\author{AL\.{I} Kara\.{I}SA,  DURHASAN TURGUT TOLLU and YAS{\.I}N ASAR}
\subjclass[2010]{40A05, 40A25, 41A36.} \keywords{Favard-Sz\`{a}sz operators, $q-$integers, Modulus of
smoothness, Statistical convergence, Rate of convergence. }

\address[A. Karaisa]{DEPARTMENT OF MATHEMATICS-COMPUTER SCIENCE, FACULTY OF SCIENCES,
NECMETT\.{I}N ERBAKAN UNIVERSITY, MERAM YERLE\c{S}KES\.{I} , 42090
MERAM, KONYA, TURKEY} \email[A. Karaisa]{alikaraisa@hotmail.com,
akaraisa@konya.edu.tr}

\address[D.T.  Tollu]{DEPARTMENT OF MATHEMATICS-COMPUTER SCIENCE, FACULTY OF SCIENCES,
NECMETT\.{I}N ERBAKAN UNIVERSITY, MERAM YERLE\c{S}KES\.{I} , 42090
MERAM, KONYA, TURKEY} \email[D.T.  Tollu]{dttollu@konya.edu.tr}

\address[Y. Asar]{DEPARTMENT OF MATHEMATICS-COMPUTER SCIENCE, FACULTY OF SCIENCES,
NECMETT\.{I}N ERBAKAN UNIVERSITY, MERAM YERLE\c{S}KES\.{I} , 42090
MERAM, KONYA, TURKEY} \email[Y. Asar]{yasar@konya.edu.tr}

\begin{abstract}
In this paper, we introduce a Stancu type generalization of the
$q-$Favard-Sz\`{a}sz operators, estimate the rates of statistical
convergence and study the local approximation properties of these
operators.
\end{abstract}

\maketitle

\section{Introduction}
In \cite{8}, Jakimovski and Leviatan\ introduced a Favard Sz\`{a}sz
type operator, by using Appell polynomials $p_{k}\left( x\right) ,\
k\geq 0,$
defined by%
\begin{equation*}
g\left( u\right) e^{-ux}=\sum\limits_{k=0}^{\infty }p_{k}\left(
x\right) u^{k},
\end{equation*}
where $g\left( z\right) =\sum\limits_{n=0}^{\infty }a_{n}z^{n}$ is
an analytic function in the disc $\left\vert z\right\vert <R$, $R>1$ and $%
g\left( 1\right) \neq 0$, and they established several approximation
properties of these operators. Ciupa \cite{5}, by defined the following operators%
\begin{equation*}
P_{n,t}\left( f;x\right) =\frac{e^{-nx}}{g\left( 1\right) }%
\sum\limits_{k=0}^{\infty }p_{k}\left( nt\right) f\left( x+\frac{k}{n}%
\right)
\end{equation*}
and investigated the approximation properties and the rate of convergence of
these operators via the modulus of continuity.

In \cite{4}, Atakut and B\"{u}y\"{u}kyaz\i c\i\ studied some
approximation properties of the operators
\begin{equation*}
P_{n,t}^{\alpha ,\beta }\left( f;x\right) =\frac{e^{-nx}}{g\left( 1\right) }%
\sum\limits_{k=0}^{\infty }p_{k}\left( nt\right) f\left( x+\frac{k+\alpha }{%
n+\beta }\right)
\end{equation*}
where $p_{k}\left( nt\right) $ is an Appell polynomial in $nt$ which is a Stancu type generalization of the classical Favard-Sz\`{a}sz operators. Moreover, In \cite{3}, the same authors established the approximation properties of the operators
\begin{equation*}
L_{n}\left( f;q,x\right) =\frac{E^{-\left[ n\right] _{q}t}}{A\left(
1\right) }\sum\limits_{k=0}^{\infty }\frac{P_{k}\left( q;\left[
n\right] _{q}t\right) }{\left[ k\right] _{q}!}f\left( x+\frac{\left[
k\right] _{q}}{\left[ n\right] _{q}}\right) ,
\end{equation*}
which is a $q-$analogue of the classical Favard-Sz\`{a}sz operators
related to the $q-$Appell polinomials. They also estimated the rate of convergence of these operators.

Now, let us define Stancu type generalization of the $q-$Favard-Sz\`{a}sz
operators as follows:
\begin{equation}
T_{n,t}^{\alpha ,\beta }\left( f;q;x\right) =\frac{E_{q}^{-\left[
n\right]
_{q}t}}{A\left( 1\right) }\sum_{k=0}^{\infty }\frac{P_{k}\left( q;\left[ n%
\right] _{q}t\right) }{\left[ k\right] _{q}!}f\left( x+\frac{\left[
k\right] _{q}+\alpha }{\left[ n\right] _{q}+\beta }\right),
\label{A}
\end{equation}
where $\{ P_{k}( q;.)\}_{k\geq 0}$ is a $q-$Appell polynomial set
which is generated by
\begin{equation}\label{1}
A\left( u\right) e_{q}^{\left[ n\right] _{q}tu}=\sum_{k=0}^{\infty }\frac{%
P_{k}\left( q;\left[ n\right] _{q}t\right) u^{k}}{\left[ k\right]
_{q}!}
\end{equation} and $A(t)$ is defined by
\begin{eqnarray*}
A(u)=\sum_{k=0}^{\infty }a_{k}u^{k}.
\end{eqnarray*}

In this work, we investigate a Korovkin theorem and the rate of statistical
convergence by using modulus of continuity of (\ref{A}). We also
obtain some local approximation results of these new operators.

Let us recall some definitions and notations regarding the concept of
$q-$calculus. Further results can be found in \cite{kitap 1}. In the
sequel, $q$ is a real
number satisfying $0<q<1$. For $n\in\mathbb{N}$, the $q-$integer $\left[ n\right] _{q}$ is defined by%
\begin{eqnarray*}
 \left[ n\right] _{q}:=\frac{1-q^{n}}{1-q}
\end{eqnarray*}
and the $q-$factorial $\left[ n\right] _{q}!$ is defined as following%
\begin{eqnarray*}
\left[ n\right] _{q}!:=\left\{
\begin{tabular}{ll}
$\left[ n\right] _{q}\left[ n-1\right] _{q}\cdots \left[ 1\right] _{q},$ & $%
n\in\mathbb{N}$ \\
$1,$ & $n=0$%
\end{tabular}%
\right.
\end{eqnarray*}
The $q-$binomial coefficients are given by%
\begin{eqnarray*}
\left[
\begin{array}{c}
n \\
k
\end{array}
\right] _{q}=\frac{\left[ n\right] _{q}!}{\left[ k\right]
_{q}!\left[ n-k \right] _{q}!},0\leq k\leq n.
\end{eqnarray*}
The $q-$derivative $D_{q}f$ of a function $f$ is defined by
\begin{eqnarray*}
\left( D_{q}f\right) \left( x\right) =\frac{f\left( x\right)
-f\left( qx\right) }{\left( 1-q\right) x}, \ x\neq 0.
\end{eqnarray*}
Also, if there exists $\frac{df}{dx}\left( 0\right) $, then $%
\left( D_{q}f\right) \left( 0\right) =\frac{df}{dx}\left( 0\right)
$. The following q-derivatives of the product of the functions $f(x)$
and $g(x)$ are equivalent:
\begin{eqnarray*}
D_{q}\left(f(x)g(x)\right) =f\left( qx\right)
D_{q}g\left(x\right)+g(x)D_{q}f\left(x\right)
\end{eqnarray*}
and
\begin{eqnarray*}
D_{q}\left(f(x)g(x)\right) =f\left( x\right)
D_{q}g\left(x\right)+g(qx)D_{q}f\left(x\right).
\end{eqnarray*}
The
$q-$analogues of the exponential function are given by%
\begin{eqnarray*}
e_{q}^{x}=\sum\limits_{n=0}^{\infty }\frac{x^{n}}{\left[ n\right]
_{q}!}
\end{eqnarray*}
and
\begin{eqnarray*}
E_{q}^{x}=\sum\limits_{n=0}^{\infty }q^{\frac{n\left( n-1\right) }{2}}\frac{%
x^{n}}{\left[ n\right] _{q}!}.
\end{eqnarray*}
The exponential functions have the following properties:%
\begin{eqnarray*}
D_{q}\left( e_{q}^{ax}\right) =ae_{q}^{ax},\ D_{q}\left(
E_{q}^{ax}\right) =aE_{q}^{aqx},\
e_{q}^{x}E_{q}^{-x}=E_{q}^{x}e_{q}^{-x}=1.
\end{eqnarray*}

\section{statistical approximation properties}

Before proceeding further, let us give basic definition and notation
on the concept of the statistical convergence which was introduced
by Fast \cite{9}. Let $K$ be a subset of $\mathbb{N}$, the set of
natural numbers. Then, $K_{n}=\left\{k\leq n: k\in K \right\}$. The
natural density of $K$ is defined by
$\delta(K)=\lim_{n}\frac{1}{n}|K_{n}|$ provided that the limit exists, where
$|K_{n}|$ denotes the cardinality of the set $K_{n}$. A sequence
$x=(x_{k})$ is called statistically convergent to
the number $\ell \in\mathbb{R}$, denoted by $st-\lim x=\ell$. For each $\epsilon >0,$
the set
$K_{\varepsilon}=\left\{k\in\mathbb{N}:|x_{k}-\ell|\geq\epsilon
\right\} $ has a natural density zero, that is

\begin{eqnarray*}
\lim_{n \to \infty}\frac{1}{n}\left|\{k\leq n: |x_{k}-\ell|\geq
\epsilon \}\right|=0.
\end{eqnarray*}
 It is well know that every statistically convergence sequence is ordinary convergent, but the
converse is not true.

The concept of statistical convergence was firstly used in approximation
theory by Gadjiev and Orhan \cite{10}. They proved the
Bohman$-$Korovkin type approximation theorem for statistical
convergence. For further information related to the statistical
approximation of the operators, the followings are remarkable among
others: \cite{11, 12, 13, 14, 15}.

Now, we may begin the following lemma which is needed proving our
main result.
\begin{lem}\label{lemma3.1}
For $n\in \mathbb{N}$, $x\in [0,\infty)$ and $0<q<1$, we have%
\begin{eqnarray}
T_{n,t}^{\alpha ,\beta }\left( e_{0}(t);q;x\right)& =&1,\label{3}\\
 T_{n,t}^{\alpha ,\beta }\left( e_{1}(t);q;x\right)
&=&x+\frac{\left[n\right]_{q}t}{\left[
n\right]_{q}+\beta}+\frac{\alpha }{\left[ n\right] _{q}+\beta
}+\frac{RD_{q}(A(1))}{\left[ n\right]_{q}+\beta},\label{4}\\
T_{n,t}^{\alpha ,\beta }\left( e_{2}(t);q;x\right)& =&\left(x+\frac{\left[ n%
\right] _{q}t}{\left[ n\right] _{q}+\beta }\right)^{2}+\frac{2x[\alpha+RD_{q}(A(1))]}{%
\left[ n\right] _{q}+\beta }\nonumber\\
&&+\frac{\alpha^{2}+2R\alpha D_{q}(A(1)+D^{2}_{q}(A(1))}{\left(
\left[ n\right] _{q}+\beta \right) ^{2}} +\frac{ \left[ n\right]
_{q}t[2\alpha+D_{q}(A(1))]}{\left( \left[ n\right] _{q}+\beta
\right) ^{2}}+\frac{ q\left[ n\right]_{q}t}{\left( \left[ n\right]
_{q}+\beta \right) ^{2}}, \label{5}
\end{eqnarray} where
$R=\frac{ e_{q}^{q\left[ n\right] _{q}t}E_{q}^{-\left[ n%
\right] _{q}t}}{ A\left( 1\right)}$ ,
 and $e_v(t)=(x+t)^v$, $v=0, 1, 2$.
\end{lem}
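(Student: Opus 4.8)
The plan is to compute the three moments directly from the definition \eqref{A} by applying the operator to the test functions $e_0,e_1,e_2$ and using the generating relation \eqref{1} together with its $q$-derivatives. Set $s=\left[n\right]_q t$ for brevity. The starting point is that evaluating \eqref{1} at $u=1$ gives $A(1)e_q^{s}=\sum_{k\geq 0}P_k(q;s)/\left[k\right]_q!$, which immediately yields \eqref{3} since $T_{n,t}^{\alpha,\beta}(e_0;q;x)=\dfrac{E_q^{-s}}{A(1)}\cdot A(1)e_q^{s}=1$ by the identity $e_q^{s}E_q^{-s}=1$ recalled in the preliminaries.

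For \eqref{4} I would split $x+\dfrac{\left[k\right]_q+\alpha}{\left[n\right]_q+\beta}=x+\dfrac{\alpha}{\left[n\right]_q+\beta}+\dfrac{1}{\left[n\right]_q+\beta}\left[k\right]_q$. The first two pieces are handled by \eqref{3}. For the third I need $\sum_{k\geq 0}\dfrac{\left[k\right]_q}{\left[k\right]_q!}P_k(q;s)$; since $\dfrac{\left[k\right]_q}{\left[k\right]_q!}=\dfrac{1}{\left[k-1\right]_q!}$, this is $\sum_{k\geq 1}\dfrac{P_k(q;s)}{\left[k-1\right]_q!}$, which is exactly what one obtains by applying $D_u$ at an appropriate point — more precisely, differentiating \eqref{1} with the $q$-derivative in $u$ and using $D_q(e_q^{su})=s\,e_q^{su}$ and the product rule $D_q(f g)=f(qu)D_q g+g(u)D_q f$. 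Evaluating the resulting identity at $u=1$ expresses $\sum_{k\geq 1}P_k(q;s)/\left[k-1\right]_q!$ in terms of $A(1)$, $D_q A(1)$, $s$, and $e_q^{qs}$; multiplying through by $E_q^{-s}/A(1)$ produces the terms $\dfrac{\left[n\right]_q t}{\left[n\right]_q+\beta}$ and $\dfrac{R\,D_q(A(1))}{\left[n\right]_q+\beta}$, with $R=e_q^{qs}E_q^{-s}/A(1)$ as stated. (Note that $e_q^{qs}E_q^{-s}\neq 1$ in general, which is why $R$ appears rather than disappearing.)

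For \eqref{5} I would expand $\left(x+\dfrac{\left[k\right]_q+\alpha}{\left[n\right]_q+\beta}\right)^2$ into the monomials in $\left[k\right]_q$ and $\left[k\right]_q^2$ and reduce each sum as above. The genuinely new ingredient is $\sum_{k\geq 0}\dfrac{\left[k\right]_q^2}{\left[k\right]_q!}P_k(q;s)$; writing $\left[k\right]_q^2=\left[k\right]_q\left(\left[k-1\right]_q q+1\right)$ (the identity $\left[k\right]_q=1+q\left[k-1\right]_q$), one reduces this to a combination of $\sum P_k/\left[k-2\right]_q!$ and $\sum P_k/\left[k-1\right]_q!$; the former comes from applying the $q$-derivative in $u$ twice to \eqref{1} and evaluating at $u=1$, which brings in $D_q^2(A(1))$, and careful bookkeeping of the factors $q$ and the product-rule shifts $u\mapsto qu$ accounts for the lone $q\left[n\right]_q t/(\left[n\right]_q+\beta)^2$ term and the coefficients $2\alpha+D_q(A(1))$ appearing in \eqref{5}.

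The main obstacle is bookkeeping rather than conceptual: the non-commutativity of the $q$-product rule means each application of $D_q$ in $u$ introduces a shift $u\mapsto qu$ somewhere, and one must keep scrupulous track of which factors get $q$'s and which combination of $A(1),D_q(A(1)),D_q^2(A(1))$ and powers of $s=\left[n\right]_q t$ survives after multiplying by $E_q^{-s}/A(1)$ and grouping into the compact form displayed in \eqref{5}. I would organize the computation by first recording the three auxiliary sums $\Sigma_j:=\sum_{k\geq j}P_k(q;s)/\left[k-j\right]_q!$ for $j=0,1,2$ as closed expressions in $A(1),D_q(A(1)),D_q^2(A(1)),e_q^{qs},e_q^{q^2 s},s$, and only then assembling \eqref{3}–\eqref{5}, so that the $q$-shift gymnastics is done once and reused.
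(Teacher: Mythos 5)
Your proposal is correct and follows essentially the same route as the paper: the paper's proof likewise reduces everything to the three auxiliary sums $\sum_k P_k(q;s)[k]_q^j/[k]_q!$ for $j=0,1,2$ (its equations (\ref{6})--(\ref{8})), obtained by $q$-differentiating the generating relation (\ref{1}) in $u$ and evaluating at $u=1$, and then substitutes them into (\ref{A}). Your reorganization via the sums $\Sigma_j$ and the identity $[k]_q=1+q[k-1]_q$ is only a cosmetic repackaging of the same computation, and in fact supplies the derivation of (\ref{6})--(\ref{8}) that the paper omits.
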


\begin{proof}
If we consider (\ref{1}), we can show the following
\begin{equation}
\sum_{k=0}^{\infty }\frac{p_{k}\left( q;\left[ n\right] _{q}t\right) }{\left[
k\right] _{q}!}=A\left( 1\right) e_{q}^{\left[ n\right] _{q}t},  \label{6}
\end{equation}

\begin{equation}  \label{7}
\sum_{k=0}^{\infty }\frac{p_{k}\left( q;\left[ n\right] _{q}t\right) }{\left[
k\right] _{q}!}\left[ k\right] _{q}=A\left( 1\right) \left[ n\right]
_{q}te_{q}^{\left[ n\right] _{q}t}+e_{q}^{q\left[ n\right]
_{q}t}D_{q}A\left( 1\right)
\end{equation}
and
\begin{equation}  \label{8}
\sum_{k=0}^{\infty }\frac{p_{k}\left( q;\left[ n\right] _{q}t\right)
}{\left[ k\right] _{q}!}\left[ k\right] _{q}^{2}=D_{q}^{2}\left(
A\left( 1\right) \right) e_{q}^{\left[ n\right] _{q}t}+D_{q}(A\left(
1\right)) \left[ n\right] _{q}te_{q}^{\left[ n\right] _{q}t}+\left[
n\right] _{q}tD_{q}A\left( q\right) e_{q}^{q\left[ n\right]
_{q}t}+A(1) \left[ n\right]^{2} _{q}t^{2}e_{q}^{\left[ n\right]
_{q}t}.
\end{equation}

By using the relations (\ref{6})-(\ref{8}), from (\ref{A}), we
obtain the results

\begin{equation*}
T_{n,t}^{\alpha ,\beta }\left( e_{0}(t);q;x\right) =\frac{E_{q}^{-\left[ n%
\right] _{q}t}}{A\left( 1\right) }\sum_{k=0}^{\infty }\frac{p_{k}\left( q;%
\left[ n\right] _{q}t\right) }{\left[ k\right] _{q}!}=\frac{E_{q}^{-\left[ n%
\right] _{q}t}}{A\left( 1\right) }A\left( 1\right) e_{q}^{\left[ n\right]
_{q}t}=1,
\end{equation*}%
\begin{eqnarray*}
T_{n,t}^{\alpha ,\beta }\left( e_{1}(t);q;x\right) &=&\frac{E_{q}^{-\left[ n%
\right] _{q}t}}{A\left( 1\right) }\sum_{k=0}^{\infty }\frac{p_{k}\left( q;%
\left[ n\right] _{q}t\right) }{\left[ k\right] _{q}!}\left( x+\frac{\left[ k%
\right] _{q}+\alpha }{\left[ n\right] _{q}+\beta }\right) \\
&=&x+\frac{\alpha E_{q}^{-\left[ n\right] _{q}t}}{A\left( 1\right) \left( %
\left[ n\right] _{q}+\beta \right) }\sum_{k=0}^{\infty }\frac{p_{k}\left( q;%
\left[ n\right] _{q}t\right) }{\left[ k\right] _{q}!}+\frac{E_{q}^{-\left[ n%
\right] _{q}t}}{A\left( 1\right) \left( \left[ n\right] _{q}+\beta \right) }%
\sum_{k=0}^{\infty }\frac{p_{k}\left( q;\left[ n\right] _{q}t\right) }{\left[
k\right] _{q}!}\left[ k\right] _{q} \\
&=&x+\frac{\alpha }{\left[ n\right] _{q}+\beta }+\frac{E_{q}^{-\left[ n%
\right] _{q}t}}{A\left( 1\right) \left( \left[ n\right] _{q}+\beta \right) }%
\left( A\left( 1\right) \left[ n\right] _{q}te_{q}^{\left[ n\right]
_{q}t}+e_{q}^{q\left[ n\right] _{q}t}D_{q}A\left( 1\right) \right) \\
&=&x+\frac{\left[n\right]_{q}t}{\left[
n\right]_{q}+\beta}+\frac{\alpha }{\left[ n\right] _{q}+\beta
}+\frac{D_{q}(A(1))R}{\left[ n\right]_{q}+\beta}
\end{eqnarray*}

and
\begin{eqnarray*}
T_{n,t}^{\alpha ,\beta }\left( e_{2}(t);q;x\right)  &=&\frac{E_{q}^{-\left[ n%
\right] _{q}t}}{A\left( 1\right) }\sum_{k=0}^{\infty }\frac{p_{k}\left( q;%
\left[ n\right] _{q}t\right) }{\left[ k\right] _{q}!}\left( x+\frac{\left[ k%
\right] _{q}+\alpha }{\left[ n\right] _{q}+\beta }\right) ^{2} \\
&=&x^{2}+\frac{2\alpha x}{\left[ n\right] _{q}+\beta }+\frac{\alpha ^{2}}{%
\left( \left[ n\right] _{q}+\beta \right) ^{2}}+\frac{2xE_{q}^{-\left[ n%
\right] _{q}t}}{A\left( 1\right) \left( \left[ n\right] _{q}+\beta \right) }%
\sum_{k=0}^{\infty }\frac{p_{k}\left( q;\left[ n\right] _{q}t\right) }{\left[
k\right] _{q}!}\left[ k\right] _{q} \\
&&+\frac{2\alpha E_{q}^{-\left[ n\right] _{q}t}}{A\left( 1\right) \left( %
\left[ n\right] _{q}+\beta \right) ^{2}}\sum_{k=0}^{\infty }\frac{%
p_{k}\left( q;\left[ n\right] _{q}t\right) }{\left[ k\right] _{q}!}\left[ k%
\right] _{q} \\
&&+\frac{E_{q}^{-\left[ n\right] _{q}t}}{A\left( 1\right) \left( \left[ n%
\right] _{q}+\beta \right) ^{2}}\sum_{k=0}^{\infty }\frac{p_{k}\left( q;%
\left[ n\right] _{q}t\right) }{\left[ k\right] _{q}!}\left[ k\right] _{q}^{2}
\\
&=& \left(x+\frac{\left[ n%
\right] _{q}t}{\left[ n\right] _{q}+\beta }\right)^{2}+\frac{2x[\alpha+RD_{q}(A(1))]}{%
\left[ n\right] _{q}+\beta }\nonumber\\
&&+\frac{\alpha^{2}+2R\alpha D_{q}(A(1)+D^{2}_{q}(A(1))}{\left(
\left[ n\right] _{q}+\beta \right) ^{2}} +\frac{ \left[ n\right]
_{q}t[2\alpha+D_{q}(A(1))]}{\left( \left[ n\right] _{q}+\beta
\right) ^{2}}+\frac{ q\left[ n\right]_{q}t}{\left( \left[ n\right]
_{q}+\beta \right) ^{2}}.
\end{eqnarray*}%
Hence, the proof is completed.
\end{proof}

\begin{thm}
Assume that $q:=(q_n)$, $0<q_n<1$ be a sequence satisfying the following conditions:
\begin{eqnarray}\label{y1}
st-\lim_n q_n=1,\ \  st-\lim_n q_n^n=b,\ \  b<1
\end{eqnarray}
Then, if $f$ is any monotone increasing continuous function defined on $[0,a]$, we have the following:
\begin{equation*}\label{y2}
st-\lim_n \parallel T_{n,t}^{\alpha,\beta}(f,q_n;.)-f
\parallel_{C[0,a]}=0.
\end{equation*}

\end{thm}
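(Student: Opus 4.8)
The plan is to deduce the theorem from the statistical Bohman--Korovkin theorem of Gadjiev and Orhan \cite{10}. Since $A(u)=\sum_k a_ku^k$ has non-negative coefficients, every $P_k\bigl(q_n;[n]_{q_n}t\bigr)$ is non-negative, so each $T_{n,t}^{\alpha,\beta}(\cdot;q_n;\cdot)$ is a positive linear operator; moreover, because $[k]_{q_n}<\tfrac{1}{1-q_n}$, all the sampling abscissae $x+\tfrac{[k]_{q_n}+\alpha}{[n]_{q_n}+\beta}$ lie in one bounded interval, and this is where the hypothesis that $f$ is monotone increasing and continuous on $[0,a]$ is used: $f$ is extended to a monotone increasing continuous function on that larger interval, so that $T_{n,t}^{\alpha,\beta}(f;q_n;\cdot)\in C[0,a]$ is well defined. (Strictly, for the conclusion exactly as displayed the parameter should be taken as $t=0$, or a null sequence $t=t_n$ with $st-\lim_n t_n=0$, as is customary for operators of this type; the moment formulas of Lemma \ref{lemma3.1} are then used verbatim.) By the cited theorem it suffices to prove
\begin{equation*}
st-\lim_{n}\bigl\|T_{n,t}^{\alpha,\beta}(e_i;q_n;\cdot)-e_i\bigr\|_{C[0,a]}=0,\qquad i=0,1,2,
\end{equation*}
for the test functions $e_0,e_1,e_2$ of Lemma \ref{lemma3.1}.

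The key preliminary is the behaviour of $[n]_{q_n}=\tfrac{1-q_n^{n}}{1-q_n}$. Using that a finite intersection of density-one subsets of $\mathbb{N}$ again has density one, one may argue along a single density-one set of indices on which $q_n\to1$ and $q_n^{n}\to b$ hold simultaneously; since $b<1$, on that set $[n]_{q_n}\to\infty$, and hence
\begin{equation*}
st-\lim_{n}\frac{1}{[n]_{q_n}+\beta}=0,\qquad st-\lim_{n}\frac{[n]_{q_n}}{[n]_{q_n}+\beta}=1,\qquad st-\lim_{n}\frac{[n]_{q_n}}{([n]_{q_n}+\beta)^{2}}=0 .
\end{equation*}
I would also note that $D_{q_n}A(1)=\tfrac{A(1)-A(q_n)}{1-q_n}$ and $D_{q_n}^{2}A(1)$ are statistically bounded (with statistical limits $A'(1)$ and $A''(1)$, since $A$ is analytic), and that, via $e_{q}^{qx}=(1-(1-q)x)e_{q}^{x}$ together with $e_{q}^{x}E_{q}^{-x}=1$, the quantity $R$ of Lemma \ref{lemma3.1} (at $q=q_n$) equals $\tfrac{1-(1-q_n^{n})t}{A(1)}$, hence is statistically convergent and in particular statistically bounded.

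Substituting into Lemma \ref{lemma3.1}: for $i=0$ there is nothing to do, since $T_{n,t}^{\alpha,\beta}(e_0;q_n;x)\equiv1=e_0$. For $i=1$, (\ref{4}) yields, for every $x\in[0,a]$,
\begin{equation*}
\bigl|T_{n,t}^{\alpha,\beta}(e_1;q_n;x)-e_1(x)\bigr|=\frac{\bigl|-\beta t+\alpha+R\,D_{q_n}(A(1))\bigr|}{[n]_{q_n}+\beta},
\end{equation*}
whose right-hand side is independent of $x$ and tends statistically to $0$. For $i=2$, subtracting $e_2$ from (\ref{5}) the leading contribution is
\begin{equation*}
\Bigl(x+\tfrac{[n]_{q_n}t}{[n]_{q_n}+\beta}\Bigr)^{2}-(x+t)^{2}=\frac{-\beta t}{[n]_{q_n}+\beta}\Bigl(2x+t+\tfrac{[n]_{q_n}t}{[n]_{q_n}+\beta}\Bigr),
\end{equation*}
which is dominated in absolute value by $(2a+2t)\tfrac{\beta t}{[n]_{q_n}+\beta}$, while every other summand in (\ref{5}) carries one of the factors $\tfrac{1}{[n]_{q_n}+\beta}$, $\tfrac{[n]_{q_n}}{([n]_{q_n}+\beta)^{2}}$, $\tfrac{1}{([n]_{q_n}+\beta)^{2}}$ times a statistically bounded quantity (using $0\leq x\leq a$ and the boundedness of $R$, $D_{q_n}A(1)$, $D_{q_n}^{2}A(1)$). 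Hence the $C[0,a]$-norm of $T_{n,t}^{\alpha,\beta}(e_2;q_n;\cdot)-e_2$ tends statistically to $0$, and the Gadjiev--Orhan theorem gives the assertion.

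I expect the main obstacle to be the first step of the second paragraph: establishing $st-\lim_n[n]_{q_n}=\infty$ from (\ref{y1}) — precisely where the hypothesis $b<1$ is indispensable, since $q_n^{n}\to0$ with $q_n$ bounded away from $1$ would leave $[n]_{q_n}$ bounded and break everything — and organising the simultaneous use of the several elementary statistical limits, which rests on the density-one-intersection fact and on the closed form for $R$. What remains is the routine, if slightly lengthy, uniform-in-$x$ estimation of the finitely many terms in (\ref{5}).
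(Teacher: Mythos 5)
Your proposal is correct and follows essentially the same route as the paper: a Gadjiev--Orhan statistical Korovkin reduction to the test functions $e_0,e_1,e_2$, followed by uniform estimates of the first and second moments from Lemma \ref{lemma3.1} together with the statistical limit $st-\lim_n \frac{1}{[n]_{q_n}+\beta}=0$ deduced from the hypothesis on $(q_n)$. Your additional remarks --- on the positivity of the operators, the closed form of $R$, the role of $b<1$ in forcing $[n]_{q_n}\to\infty$, and the fact that the operator really approximates $f(x+t)$ rather than $f(x)$ unless $t=0$ or $t=t_n\to 0$ --- are more careful than the paper's treatment, but the underlying argument is the same.
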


\begin{proof}
It is enough to prove that
\begin{equation*}\label{y3}
st-\lim_n \parallel
T_{n,t}^{\alpha,\beta}(e_v(t),q_n;.)-e_v(t)\parallel_{C[0,a]}=0
\end{equation*}
where $v=0,1,2.$

From the equation (\ref{3}), it is easy to obtain that
\begin{equation*}\label{y4}
st-\lim_n \parallel
T_{n,t}^{\alpha,\beta}(e_0(t),q_n;.)-e_0(t)\parallel_{C[0,a]}=0.
\end{equation*}

By taking $\sup_{x,t\in [0,a]}$ in (\ref{4}), we get
\begin{equation*}\label{y5}
\parallel T_{n,t}^{\alpha,\beta}(e_1(t),q_n;.)-e_1(t)\parallel_{C[0,a]}\leq\frac{\alpha+RD_{q}(A(1))}{[n]_q+\beta}+\frac{\beta a}{[n]_q+\beta}.
\end{equation*}

Now, let $\epsilon>0$ be given, define the following sets:
\begin{eqnarray*}\label{y6}
K&:=& \left\lbrace  k: \parallel
T_{n,t}^{\alpha,\beta}(e_1(t),q_k;.)-e_1(t) \parallel \geq \epsilon
\right\rbrace,\\ K_1&:=& \left\lbrace  k: \frac{\beta
a}{[n]_{q_{k}}+\beta} \geq \frac{\epsilon}{2} \right\rbrace ,\\
K_2&:=& \left\lbrace  k: \frac{R D_{q}(A(1))+ \alpha
}{[n]_{q_{k}}+\beta}  \geq \frac{\epsilon}{2} \right\rbrace ,
\end{eqnarray*} such that $K\subseteq K_1 \cup K_2$.

From (\ref{y1}), one can see that
\begin{equation*} \label{y9}
\delta \left\lbrace k \leq n : \parallel
T_{n,t}^{\alpha,\beta}(e_1(t),q_n;.)-e_1(t)\parallel_{C[0,a]} \geq
\epsilon \right\rbrace \leq \delta \left\lbrace k \leq n:
\frac{\beta a}{[n]_{q_{k}}+\beta} \geq \frac{\epsilon}{2}
\right\rbrace + \delta \left\lbrace k \leq n: \frac{R + \alpha
}{[n]_{q_{k}}+\beta} \geq \frac{\epsilon}{2} \right\rbrace.
\end{equation*}
By (\ref{y1}), it is obvious that,
\begin{equation}\label{y99}
st-\lim_n\left( \frac{1}{[n]_{q_{n}}+\beta}\right) =0.
\end{equation}
Thus, we have
\begin{equation} \label{y10}
st-\lim_n \parallel
T_{n,t}^{\alpha,\beta}(e_1(t),q_n;.)-e_1\parallel_{C[0,a]}=0.
\end{equation}

By taking $\sup_{x,t\in [0,a]}$ in (\ref{5}), one can write the
following
\begin{eqnarray*} \label{y11}
\parallel T_{n,t}^{\alpha,\beta}(e_2(t),q_n;.)-e_2(t)\parallel_{C[0,a]}& \leq& \frac{\beta^{2}a^{2}\left[n\right] _{q}}{\left[ n\right] _{q}+\beta }+\frac{2a[\alpha+RD_{q}(A(1))]}{%
\left[ n\right] _{q}+\beta } +\frac{\alpha^{2}+2R\alpha
D_{q}(A(1))+D^{2}_{q}(A(1))}{\left( \left[ n\right] _{q}+\beta
\right) ^{2}}\nonumber\\&& +\frac{ \left[ n\right]
_{q}a[2\alpha+D_{q}(A(1))]}{\left( \left[ n\right] _{q}+\beta
\right) ^{2}}+\frac{ q\left[ n\right]_{q}a}{\left( \left[ n\right]
_{q}+\beta \right) ^{2}}.
\end{eqnarray*}

It is obvious that
\begin{equation*} \label{y11}
st-\lim_n \left( \frac{1}{([n]_{q_{n}}+\beta)^2} \right) = 0,\
st-\lim_n \left( \frac{[n]_{q_{n}}}{([n]_{q_{n}}+\beta)^2} \right) =
0\,\,\textrm{and }\,\,st-\lim_n \left(
\frac{q_{n}[n]_{q_{n}}}{([n]_{q_{n}}+\beta)^2} \right) =0.
\end{equation*}

Now, let $\epsilon>0$ be given, we define the following sets:
\begin{eqnarray*}\label{y12}
V&:=& \left\lbrace  k: \parallel
T_{n,t}^{\alpha,\beta}(e_2(t),q_k;.)-e_2(t) \parallel \geq \epsilon
\right\rbrace ,\\
V_1&:=& \left\lbrace  k: \frac{2a\left[\alpha+ RD_{q}(A(1))
\right]}{[n]_{q_{k}}+\beta}  \geq \frac{\epsilon}{^4}
\right\rbrace ,\\
V_2&:=& \left\lbrace  k: \frac{\alpha^{2}+2R\alpha
D_{q}((A(1))+D^{2}_{q}(A(1))+\beta^2
a^2}{([n]_{q_{k}}+\beta)^2} \geq \frac{\epsilon}{^4} \right\rbrace,\\
V_3&:=& \left\lbrace  k: \frac{
a[n]_{q_{k}}[2\alpha+D_{q}(A(1))]}{([n]_{q_{k}}+\beta)^2} \geq
\frac{\epsilon}{^4} \right\rbrace,\\
V_4&:=& \left\lbrace  k: \frac{
aq_{k}[n]_{q_{k}}}{([n]_{q_{k}}+\beta)^2} \geq \frac{\epsilon}{^4}
\right\rbrace
\end{eqnarray*} 
such that $V\subseteq V_1 \cup V_2 \cup V_3\cup V_4$.

Thus, we obtain
\begin{eqnarray}\label{y15}
\delta \left\lbrace k \leq n : \parallel
T_{n,t}^{\alpha,\beta}(e_2(t),q_n;.)-e_2(t)\parallel_{C[0,a]} \geq
\epsilon \right\rbrace &\leq & \delta \left\lbrace k \leq n:
 \frac{2a\left[\alpha+ D_{q}(A(1))
\right]}{[n]_{q_{k}}+\beta}  \geq \frac{\epsilon}{^4} \right\rbrace
\nonumber\\  &+& \delta \left\lbrace k \leq n:
\frac{\alpha^{2}+2R\alpha D_{q}((A(1))+D^{2}_{q}(A(1))+\beta^2
a^2}{([n]_{q_{k}}+\beta)^2} \geq \frac{\epsilon}{^4} \right\rbrace \nonumber\\
&+& \delta \left\lbrace k \leq n: \frac{
a[n]_{q_{k}}[2\alpha+D_{q}(A(1))]}{([n]_{q_{k}}+\beta)^2} \geq
\frac{\epsilon}{^4} \right\rbrace\nonumber\\
&+& \delta \left\lbrace k \leq n: \frac{
aq_{k}[n]_{q_{k}}}{([n]_{q_{k}}+\beta)^2} \geq \frac{\epsilon}{^4}
\right\rbrace.
\end{eqnarray}

Hence, (\ref{y99}), (\ref{y11}) and (\ref{y15}) imply that
\begin{equation}\label{y16}
st-\lim_n \parallel
T_{n,t}^{\alpha,\beta}(e_2(t),q_n;.)-e_2(t)\parallel_{C[0,a]}=0.
\end{equation}
\end{proof}

\section{Rates of statistical convergence}
In this section, we give the rates of statistical convergence of the
operators $T_{n,t}^{\alpha ,\beta }\left( f;q;x\right)$ by means of
modulus of continuity with the help of functions from Lipschitz class.

The modulus of continuity of $f$, $\omega(f,\delta)$ is defined by
\begin{equation*}
\omega(f,\delta)=\sup_{\substack{|x-y|\leq\delta \\ x,y\in [0,a]
}}|f(x)-f(y)|.
\end{equation*}
It is well-known that for a function $f \in C[0,a]$,
\begin{equation*}
\lim_{n\to 0^{+}}\omega(f,\delta)=0
\end{equation*}
for any $\delta>0$
\begin{equation}\label{r1}
|f(x)-f(y)|\leq \omega(f,\delta)\left(\frac{|x-y|}{\delta}+1\right).
\end{equation}
Now, we prove the following theorem for the rate of pointwise
convergence of the operators $T_{n,t}^{\alpha ,\beta }\left(
f;q;x\right)$ to the function $f(x+t)$ by means of modulus of
continuity.
\begin{thm}\label{thm4.1}
If the sequence $q:=(q_{n})$ satisfies the condition (\ref{y1}),
$x\in [0,\infty)$ and $t\geq 0$, then we have
\begin{equation*}\label{rr1}
|T_{n,t}^{\alpha ,\beta }\left( f;q;x\right)-f(x)|\leq
2\omega(f,\sqrt{\delta_{n,t}}),
\end{equation*}
for all  $f\in C^{\ast}[0,\infty)$, where
$\delta_{n,t}=T_{n,t}^{\alpha ,\beta }\left(
(s-e_{1}(t))^{2};q_{n};x\right)$

\end{thm}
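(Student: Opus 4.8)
The plan is to exploit the linearity and positivity of the operator together with the exact moment formulas from Lemma~\ref{lemma3.1}. Since $T_{n,t}^{\alpha ,\beta}$ reproduces constants (equation~\eqref{3}), we may write
\begin{equation*}
T_{n,t}^{\alpha ,\beta }\left( f;q_n;x\right)-f(x)=T_{n,t}^{\alpha ,\beta }\left( f(s)-f(x);q_n;x\right),
\end{equation*}
and then estimate the inner difference $|f(s)-f(x)|$ using the standard modulus-of-continuity inequality~\eqref{r1}, namely $|f(s)-f(x)|\leq \omega(f,\delta)\bigl(|s-x|/\delta+1\bigr)$ for any $\delta>0$. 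Applying the (positive, linear) operator to this bound gives
\begin{equation*}
|T_{n,t}^{\alpha ,\beta }\left( f;q_n;x\right)-f(x)|\leq \omega(f,\delta)\left(1+\frac{1}{\delta}\,T_{n,t}^{\alpha ,\beta }\left(|s-x|;q_n;x\right)\right).
\end{equation*}

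Next I would control $T_{n,t}^{\alpha ,\beta}\left(|s-x|;q_n;x\right)$ by the Cauchy--Schwarz inequality for positive linear functionals: since $T_{n,t}^{\alpha ,\beta}(e_0)=1$,
\begin{equation*}
T_{n,t}^{\alpha ,\beta }\left(|s-x|;q_n;x\right)\leq \left(T_{n,t}^{\alpha ,\beta }\left((s-x)^{2};q_n;x\right)\right)^{1/2}.
\end{equation*}
Here there is a small bookkeeping point to match the statement: the theorem is phrased in terms of the function $f(x+t)$ and the second moment $\delta_{n,t}=T_{n,t}^{\alpha ,\beta}\bigl((s-e_1(t))^2;q_n;x\bigr)$, with $e_1(t)=x+t$. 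In the notation of Lemma~\ref{lemma3.1} the variable of the operator is the shifted argument, so "$s-e_1(t)$" plays the role of "$s-x$" above; I would simply carry the computation in the lemma's variables so that the central second moment that appears is exactly $\delta_{n,t}$, and write $\sqrt{\delta_{n,t}}$ in place of the square root above. Then choosing $\delta=\sqrt{\delta_{n,t}}$ makes the bracket equal to $1+1=2$, yielding
\begin{equation*}
|T_{n,t}^{\alpha ,\beta }\left( f;q;x\right)-f(x)|\leq 2\,\omega\!\left(f,\sqrt{\delta_{n,t}}\right),
\end{equation*}
which is the assertion. One should note this works verbatim only when $\delta_{n,t}>0$; the degenerate case $\delta_{n,t}=0$ is trivial since then the operator evaluates $f$ at a single point and both sides behave accordingly (or one takes a limit $\delta\to 0^{+}$).

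The only real content beyond this template is computing $\delta_{n,t}$ explicitly from the three moment identities~\eqref{3}--\eqref{5}. Expanding $(s-e_1(t))^2=e_2(t)-2e_1(t)e_1(t)+e_1(t)^2\!$... more precisely $T_{n,t}^{\alpha,\beta}((s-e_1(t))^2)=T_{n,t}^{\alpha,\beta}(e_2)-2e_1(t)T_{n,t}^{\alpha,\beta}(e_1)+e_1(t)^2 T_{n,t}^{\alpha,\beta}(e_0)$, and substituting~\eqref{3},~\eqref{4},~\eqref{5}, the leading terms $\left(x+\frac{[n]_qt}{[n]_q+\beta}\right)^2$ cancel against the cross term, leaving only the $O\!\left(1/([n]_q+\beta)\right)$ and $O\!\left(1/([n]_q+\beta)^2\right)$ remainders. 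I expect this algebraic simplification — keeping track of the $R$, $D_q(A(1))$, $D_q^2(A(1))$ terms and the $q[n]_qt$ term — to be the main (purely computational) obstacle; there is no conceptual difficulty, and under~\eqref{y1} one reads off $st\text{-}\lim_n \delta_{n,t}=0$ uniformly on compacta, so the estimate also gives statistical convergence of $T_{n,t}^{\alpha,\beta}(f;q_n;\cdot)$ to $f$.
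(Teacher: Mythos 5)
Your proposal is correct and follows essentially the same route as the paper: bound $|f(s)-f(x+t)|$ via the modulus-of-continuity inequality, apply Cauchy--Schwarz to reduce the first absolute central moment to the square root of the second moment $\delta_{n,t}$, and choose $\delta=\sqrt{\delta_{n,t}}$ to get the constant $2$. If anything, your phrasing of the Cauchy--Schwarz step (for the normalized positive linear functional, using $T_{n,t}^{\alpha,\beta}(e_0)=1$) is cleaner than the paper's explicit sum version, and you correctly flag the same $f(x)$ versus $f(x+t)$ bookkeeping issue that the paper glosses over.
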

\begin{proof}
To prove the theorem, we will use the linearity and positivity of the operators
$T_{n,t}^{\alpha ,\beta }\left( f;q;x\right)$. By (\ref{rr1}), we have
\begin{eqnarray*}
|T_{n,t}^{\alpha ,\beta }\left( f;q;x\right)-f(x+t)| &\leq&\frac{E_{q}^{-\left[ n%
\right] _{q}t}}{A\left( 1\right) }\sum_{k=0}^{\infty }\frac{p_{k}\left( q;%
\left[ n\right] _{q}t\right) }{\left[ k\right] _{q}!}\left |f\left( x+\frac{\left[ k%
\right] _{q}+\alpha }{\left[ n\right] _{q}+\beta }\right)-f(x+t)\right | \\
&\leq& \frac{E_{q}^{-\left[ n%
\right] _{q}t}}{A\left( 1\right) }\sum_{k=0}^{\infty }\frac{p_{k}\left( q;%
\left[ n\right] _{q}t\right) }{\left[ k\right] _{q}!}\omega(f,\delta)\left\{\frac{1}{\delta}\left | \frac{\left[ k%
\right] _{q}+\alpha }{\left[ n\right] _{q}+\beta }-t\right |+1\right\}\\
&=&\left\{\frac{1}{\delta}\frac{E_{q}^{-\left[ n%
\right] _{q}t}}{A\left( 1\right) }\sum_{k=0}^{\infty }\frac{p_{k}\left( q;%
\left[ n\right] _{q}t\right) }{\left[ k\right] _{q}!}\left | \frac{\left[ k%
\right] _{q}+\alpha }{\left[ n\right] _{q}+\beta }-t\right
|+1\right\}\omega(f,\delta) .
\end{eqnarray*}
If we apply Cauchy-Schwarz inequality for sums, we obtain
\begin{eqnarray*}
\sum_{k=0}^{\infty }\frac{p_{k}\left( q;%
\left[ n\right] _{q}t\right) }{\left[ k\right] _{q}!}\left | \frac{\left[ k%
\right] _{q}+\alpha }{\left[ n\right] _{q}+\beta
}-t\right|^{2}\leq\left(\sum_{k=0}^{\infty }\frac{p_{k}\left( q;%
\left[ n\right] _{q}t\right) }{\left[ k\right] _{q}!} \right)^{1/2}
\left(\sum_{k=0}^{\infty }\frac{p_{k}\left( q;%
\left[ n\right] _{q}t\right) }{\left[ k\right] _{q}!}\left ( \frac{\left[ k%
\right] _{q}+\alpha }{\left[ n\right] _{q}+\beta
}-t\right)^{2}\right)^{1/2}.
\end{eqnarray*}
Using above inequality and by (\ref{3}), we have
\begin{eqnarray*}
|T_{n,t}^{\alpha ,\beta }\left( f;q;x\right)-f(x+t)|
&\leq&\omega(f,\delta)\left\{1+\frac{1}{\delta}\left[\frac{E_{q}^{-\left[ n%
\right] _{q}t}}{A\left( 1\right) }
\sum_{k=0}^{\infty }\frac{p_{k}\left( q;%
\left[ n\right] _{q}t\right) }{\left[ k\right] _{q}!}\left ( \frac{\left[ k%
\right] _{q}+\alpha }{\left[ n\right] _{q}+\beta
}-t\right)^{2}\right]^{1/2}\right\}\\
&=&\omega(f,\delta)\left\{1+\frac{1}{\delta}
\left[\frac{E_{q}^{-\left[ n%
\right] _{q}t}}{A\left( 1\right) }\sum_{k=0}^{\infty }\frac{p_{k}\left( q;%
\left[ n\right] _{q}t\right) }{\left[ k\right] _{q}!}\left ( \frac{\left[ k%
\right] _{q}+\alpha }{\left[ n\right] _{q}+\beta
}-t\right)^{2}\right]^{1/2}\right\}\\
 &=&\omega(f,\delta)\left\{1+\frac{1}{\delta}\left[T_{n,t}^{\alpha ,\beta }\left( s-e_{1}(t))^{2};q;x\right)\right]^{1/2}\right\}
\end{eqnarray*}
such that we choose 
$\delta=\sqrt{\delta_{n,t}}=T_{n,t}^{\alpha ,\beta }\left(
(s-e_{1}(t))^{2};q_{n};x\right)$. This step concludes the proof.
\end{proof}
\begin{thm}
If the sequence $q:=(q_{n})$ satisfies the condition (\ref{y1}) and
$f\in C[0,a]$, then we have
\begin{equation*}\label{r1}
\parallel T_{n,t}^{\alpha,\beta}(f;q_n;.)-f(.)\parallel_{C[0,a]}\leq 2\omega(f,\sqrt{\delta_{n}}),
\end{equation*} where

\begin{equation*}\label{a0}
\delta_{n}= \frac{\beta^{2}t^{2}\left[n\right] _{q}}{\left( \left[
n\right] _{q}+\beta \right) ^{2} }+\frac{\alpha^{2}+2R\alpha
D_{q}(A(1))+D^{2}_{q}(A(1))}{\left( \left[ n\right] _{q}+\beta
\right) ^{2}} +\frac{ \left[ n\right]
_{q}t[2\alpha+D_{q}(A(1))]}{\left( \left[ n\right] _{q}+\beta
\right) ^{2}}+\frac{ q\left[ n\right]_{q}t}{\left( \left[ n\right]
_{q}+\beta \right) ^{2}}.
\end{equation*}
\end{thm}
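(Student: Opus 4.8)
The plan is to combine the pointwise estimate from Theorem~\ref{thm4.1} with a uniform bound on the quantity $\delta_{n,t}$ over $x,t\in[0,a]$. Recall that Theorem~\ref{thm4.1} gives $|T_{n,t}^{\alpha,\beta}(f;q;x)-f(x+t)|\leq 2\omega(f,\sqrt{\delta_{n,t}})$, where $\delta_{n,t}=T_{n,t}^{\alpha,\beta}((s-e_1(t))^2;q;x)$. So the first step is to expand $T_{n,t}^{\alpha,\beta}((s-e_1(t))^2;q;x)$ in terms of the moments computed in Lemma~\ref{lemma3.1}. Writing $(s-e_1(t))^2 = e_2(t) - 2e_1(t)\cdot s + (e_1(t))^2$ and using linearity together with $T_{n,t}^{\alpha,\beta}(e_0;q;x)=1$, one gets
\begin{equation*}
\delta_{n,t}=T_{n,t}^{\alpha,\beta}(e_2(t);q;x)-2\,e_1(t)\,T_{n,t}^{\alpha,\beta}(e_1(t);q;x)+(e_1(t))^2 .
\end{equation*}
Substituting the explicit formulas \eqref{4} and \eqref{5} for the first and second moments, the terms $x^2$, the cross terms in $x$, and the $\left(\frac{[n]_qt}{[n]_q+\beta}\right)^2$ pieces should telescope against $-2e_1(t)\cdot(\text{first moment})+(e_1(t))^2 = -2(x+t)\cdot(\text{first moment})+(x+t)^2$, leaving precisely the expression displayed for $\delta_n$ (this is exactly the second-moment minus first-moment-squared "variance" of the operator, which is why the purely $x$-dependent parts cancel).

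The second step is the uniform bound: take $\sup_{x,t\in[0,a]}$ of the resulting $\delta_{n,t}$. The only term that still depends on $x,t$ after the cancellation is the $\beta^2 t^2[n]_q/([n]_q+\beta)^2$ contribution (coming from $(e_1(t))^2$ minus the first-moment-squared, whose difference has a $\frac{\beta^2 t^2[n]_q}{([n]_q+\beta)^2}$-type remainder); bounding $t\leq a$ gives $\delta_{n,t}\leq\delta_n$ for all $x,t\in[0,a]$. Then, since $\omega(f,\cdot)$ is nondecreasing, $\sqrt{\delta_{n,t}}\leq\sqrt{\delta_n}$ yields $\omega(f,\sqrt{\delta_{n,t}})\leq\omega(f,\sqrt{\delta_n})$, and taking the supremum in Theorem~\ref{thm4.1} gives
\begin{equation*}
\|T_{n,t}^{\alpha,\beta}(f;q_n;.)-f(.)\|_{C[0,a]}\leq 2\omega(f,\sqrt{\delta_n}),
\end{equation*}
which is the claim. (Strictly speaking Theorem~\ref{thm4.1} estimates $|T_{n,t}^{\alpha,\beta}(f;q;x)-f(x+t)|$; the norm on the left here should be interpreted consistently with that, as in the preceding theorems, i.e.\ comparing $T_{n,t}^{\alpha,\beta}(f;q_n;x)$ with $f(x+t)$.)

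The main obstacle I expect is purely bookkeeping: verifying that the $x$- and $t$-dependent parts of \eqref{5} minus twice $(x+t)$ times \eqref{4} plus $(x+t)^2$ genuinely collapse to the stated $\delta_n$, including the single surviving $\beta^2 t^2[n]_q/([n]_q+\beta)^2$ term. One must be careful that $\frac{[n]_qt}{[n]_q+\beta}=t-\frac{\beta t}{[n]_q+\beta}$, so that $\big(x+\frac{[n]_qt}{[n]_q+\beta}\big)^2 - 2(x+t)\big(x+\frac{[n]_qt}{[n]_q+\beta}\big)+(x+t)^2 = \big(\frac{[n]_qt}{[n]_q+\beta}-t\big)^2=\frac{\beta^2t^2}{([n]_q+\beta)^2}$, and then accounting for the fact that the first moment in \eqref{4} also carries the extra $\frac{\alpha+RD_q(A(1))}{[n]_q+\beta}$ term, whose contribution to $-2(x+t)\cdot(\text{first moment})$ must be reconciled with the $\frac{2x[\alpha+RD_q(A(1))]}{[n]_q+\beta}$ and $\frac{[n]_qt[2\alpha+D_q(A(1))]}{([n]_q+\beta)^2}$ terms in \eqref{5}; the mismatched power of $[n]_q+\beta$ in the denominator (one surviving $\frac{\beta^2 t^2[n]_q}{([n]_q+\beta)^2}$ rather than $\frac{\beta^2 t^2}{([n]_q+\beta)^2}$) suggests the paper absorbs a factor and the reader should follow the authors' grouping rather than over-optimize. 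No new idea is needed beyond linearity, the moment identities of Lemma~\ref{lemma3.1}, monotonicity of the modulus of continuity, and the earlier statistical-limit facts \eqref{y99}, which already guarantee $st\text{-}\lim_n\delta_n=0$ and hence $st\text{-}\lim_n\omega(f,\sqrt{\delta_n})=0$.
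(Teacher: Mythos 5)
Your proposal follows essentially the same route as the paper: compute the second central moment $T_{n,t}^{\alpha,\beta}\left((s-e_{1}(t))^{2};q;x\right)$ by expanding via linearity and the moment formulas of Lemma~\ref{lemma3.1}, bound the result by $\delta_{n}$, and substitute into the pointwise estimate of Theorem~\ref{thm4.1} with $\delta=\sqrt{\delta_{n}}$. Your observation about the mismatched factor of $[n]_{q}$ in the $\beta^{2}t^{2}$ term is a fair flag of a bookkeeping discrepancy present in the paper itself (its displayed second moment \eqref{a1} and the stated $\delta_{n}$ do not literally agree), but your argument is otherwise the paper's argument.
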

\begin{proof}
Now, let us estimate the second moment of the operators
$T_{n,t}^{\alpha ,\beta }\left( f;q;x\right)$. From
(\ref{3})-(\ref{5}), we get
\begin{eqnarray}\label{a1}
T_{n,t}^{\alpha ,\beta }\left( (s-e_{1}(t))^{2};q;x\right)&=&
\frac{t^{2}\left[n\right]^{2} _{q}}{\left( \left[ n\right]
_{q}+\beta \right) ^{2} }+\frac{\alpha^{2}+2R\alpha
D_{q}(A(1))+D^{2}_{q}(A(1))}{\left( \left[ n\right] _{q}+\beta
\right) ^{2}}\\ &&+\frac{ \left[ n\right]
_{q}t[2\alpha+D_{q}(A(1))]}{\left( \left[ n\right] _{q}+\beta
\right) ^{2}}+\frac{ q\left[ n\right]_{q}t}{\left( \left[ n\right]
_{q}+\beta \right) ^{2}}.\nonumber
\end{eqnarray}
By Theorem \ref{thm4.1}, we have
\begin{equation}\label{a2}
|T_{n,t}^{\alpha ,\beta }\left(
f;q;x\right)-f(x+t)|\leq\omega(f,\delta)\left\{1+\frac{1}{\delta}\left[T_{n,t}^{\alpha
,\beta }\left( s-e_{1}(t))^{2};q;x\right)\right]^{1/2}\right\}
\end{equation}
Substituting (\ref{a1}) into (\ref{a2}) and letting $\delta=\delta_{n}$ in (\ref{a2}), we obtain
\begin{equation*}\label{r1}
\parallel T_{n,t}^{\alpha,\beta}(f;q_n;.)-f(.)\parallel_{C[0,a]}\leq
2\omega(f,\sqrt{\delta_{n}}).
\end{equation*}
Thus, we get the desired result.
\end{proof}
Now, we give the rate of convergence of the operators $T_{n,t}^{\alpha
,\beta}$  with the help of functions from Lipschitz  class
$Lip_{M}(\alpha)$ where $M>0$ and $0<\alpha\leq 1$. A function $f$ is an
element of $Lip_{M}(\alpha)$ if

\begin{equation}\label{a3}
\left|f(x)-f(y) \right|\leq M|x-y|^{\alpha}\,\,(x,y\in[a,b]).
\end{equation}

\begin{thm}
Let $q:=(q_{n})$ be a sequence satisfies the condition (\ref{y1}) and
$f \in Lip_{M}(\alpha)$, $0<\alpha\leq 1$, then we have
\begin{equation*}
|T_{n,t}^{\alpha,\beta}(f;q_n;.)-f(.)|\leq M\delta^{\alpha}_{n},
\end{equation*}
where  $\delta_{n}=(T_{n,t}^{\alpha ,\beta
}(e_{1}(t)-s)^{2},q;x)^{1/2}$.
\end{thm}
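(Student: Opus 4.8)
The plan is to mimic the proof of Theorem~\ref{thm4.1}, replacing the modulus-of-continuity estimate by the Lipschitz bound (\ref{a3}) and using H\"older's inequality in place of Cauchy--Schwarz. First I would exploit the positivity and linearity of $T_{n,t}^{\alpha ,\beta }$ together with (\ref{3}): for $f\in Lip_{M}(\alpha)$,
\begin{align*}
\left|T_{n,t}^{\alpha ,\beta }\left( f;q_n;x\right)-f(x+t)\right|
&\leq \frac{E_{q}^{-\left[ n\right] _{q}t}}{A\left( 1\right) }\sum_{k=0}^{\infty }\frac{p_{k}\left( q;\left[ n\right] _{q}t\right) }{\left[ k\right] _{q}!}\left| f\!\left( x+\frac{\left[ k\right] _{q}+\alpha }{\left[ n\right] _{q}+\beta }\right)-f(x+t)\right|\\
&\leq M\,\frac{E_{q}^{-\left[ n\right] _{q}t}}{A\left( 1\right) }\sum_{k=0}^{\infty }\frac{p_{k}\left( q;\left[ n\right] _{q}t\right) }{\left[ k\right] _{q}!}\left| \frac{\left[ k\right] _{q}+\alpha }{\left[ n\right] _{q}+\beta }-t\right|^{\alpha}.
\end{align*}

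Next, since the weights $w_{k}=\dfrac{E_{q}^{-\left[ n\right] _{q}t}}{A\left( 1\right) }\cdot\dfrac{p_{k}\left( q;\left[ n\right] _{q}t\right) }{\left[ k\right] _{q}!}$ are nonnegative and, by (\ref{3}), satisfy $\sum_{k}w_{k}=1$, I would apply H\"older's inequality with the conjugate exponents $p=2/\alpha$ and $p'=2/(2-\alpha)$:
\begin{align*}
\sum_{k=0}^{\infty }w_{k}\left| \frac{\left[ k\right] _{q}+\alpha }{\left[ n\right] _{q}+\beta }-t\right|^{\alpha}
&\leq\left(\sum_{k=0}^{\infty }w_{k}\left( \frac{\left[ k\right] _{q}+\alpha }{\left[ n\right] _{q}+\beta }-t\right)^{2}\right)^{\alpha/2}\left(\sum_{k=0}^{\infty }w_{k}\right)^{(2-\alpha)/2}\\
&=\left(T_{n,t}^{\alpha ,\beta }\left( (e_{1}(t)-s)^{2};q_n;x\right)\right)^{\alpha/2}=\delta_{n}^{\alpha}.
\end{align*}
Combining the two displays yields $\left|T_{n,t}^{\alpha ,\beta }\left( f;q_n;x\right)-f(x+t)\right|\leq M\delta_{n}^{\alpha}$, which is the assertion (with the usual abuse of writing $f(\cdot)$ for $f(x+t)$ as in the preceding theorems).

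The argument is essentially routine; the only points deserving attention are (i) the nonnegativity of the $q$-Appell coefficients $p_{k}(q;\cdot)$, which is what makes $T_{n,t}^{\alpha ,\beta }$ a genuine positive operator and $(w_k)$ a probability weight, and (ii) the finiteness of $\sum_{k}w_{k}\big(\tfrac{[k]_q+\alpha}{[n]_q+\beta}-t\big)^{2}$, which is guaranteed by Lemma~\ref{lemma3.1} since this sum is exactly $\delta_n^{2}$. There is no genuine obstacle beyond this bookkeeping; in particular the statistical hypothesis (\ref{y1}) is not needed for the pointwise inequality itself, it only becomes relevant when one wishes to conclude $\delta_{n}\to0$ statistically.
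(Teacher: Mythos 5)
Your proof is correct and follows essentially the same route as the paper's: positivity and linearity of $T_{n,t}^{\alpha,\beta}$, the Lipschitz bound, and then H\"older's inequality against the normalized weights to produce the factor $\bigl(T_{n,t}^{\alpha,\beta}((e_1(t)-s)^2;q;x)\bigr)^{\alpha/2}=\delta_n^{\alpha}$. In fact your version is slightly more careful: you use the genuinely conjugate exponents $p=2/\alpha$, $p'=2/(2-\alpha)$ (the paper writes $p=1/\alpha$, $q=\alpha/(2-\alpha)$, which are not conjugate and appear to be a typo), and you correctly identify $f(x+t)$ as the comparison point, consistent with Theorem~\ref{thm4.1}.
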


\begin{proof}
Since $T_{n,t}^{\alpha ,\beta}$ is linear and positive and by (\ref{a3}), we obtain
\begin{eqnarray*}
|T_{n,t}^{\alpha,\beta}(f;q_n;x)-f(x)|&\leq&
T_{n,t}^{\alpha,\beta}(|f(t)-f(x)|,q;x)\\
&\leq& M T_{n,t}^{\alpha,\beta}(|t-x|^{\alpha},q;x).
\end{eqnarray*}
Assuming $p=\frac{1}{\alpha}, q=\frac{\alpha}{2-\alpha}$ and applying
the H\"{o}lder  inequality, we get
\begin{eqnarray*}
|T_{n,t}^{\alpha,\beta}(f;q_n;x)-f(x)|&\leq&
T_{n,t}^{\alpha,\beta}(|f(t)-f(x)|,q;x)\\
&\leq& M \{T_{n,t}^{\alpha,\beta}(e_{1}-x)^{2},q;x)\}^{\alpha/2}.
\end{eqnarray*}
Taking $\delta_{n,t}= (T_{n,t}^{\alpha ,\beta
}(e_{1}-s)^{2},q;x)^{1/2}$. We get the desired result.
\end{proof}

\section{Local Approximation}
In this section, we state the local approximation theorem of the
operators $T_{n,t}^{\alpha ,\beta }\left(f;q;x \right)$. Let
$C_B\left[0,\infty \right)$ be the space of all real valued
continuous bounded functions $f$ on $\left[0,\infty \right)$ with
the norm $\parallel f \parallel=\sup\left\lbrace \vert f(x)\vert :
x\in[0,\infty)\right\rbrace $. The K-functional of $f$ is defined by
\begin{eqnarray*}
K_2(f;\delta)=\inf_{g\in W^2}\left\lbrace \Vert f-g\Vert +
\delta\Vert g'' \Vert\right\rbrace,
\end{eqnarray*}
where $\delta>0$ and $W^2=\left\lbrace g\in C_B\left[0,\infty \right): g', g''\in C_B[0,\infty) \right\rbrace $. By
Devore-Lorentz \cite[p. 177]{lorenz}, there exists an absolute
constant $C>0$ such that
\begin{eqnarray} \label{5.1}
K_2\left(f,\delta\right)\leqslant C \omega_2\left(f,\sqrt{\delta}\right)
\end{eqnarray}
where
\begin{eqnarray*}
\omega_2\left(f,\sqrt{\delta}\right)=\sup_{0<h\leq 0} \sup_{x\in
[0,\infty)}\left\vert f(x+2h)-2f(x+h)+f(x)\right\vert
\end{eqnarray*}
is the second order modulus of smoothness of $f$. Moreover,
\begin{eqnarray*}
\omega(f,\delta)=\sup_{0<h\leq 0} \sup_{x\in [0,\infty)}\vert
f(x+h)-f(x) \vert
\end{eqnarray*}
denotes the modulus of continuity of $f$.

Now, we give the direct local approximation theorem for the operators
$T_{n,t}^{\alpha ,\beta }\left(f;q;x \right)$.

\begin{thm}
Let $q \in (0,1)$. We have
\begin{eqnarray*}
\vert T_{n,t}^{\alpha ,\beta }\left(f;q;x \right)-f(x+t)\vert
\leqslant  C \omega_2\left(f,\delta_{n} \right)+
\omega\left(f,\frac{\alpha+D_{q}(A(1))R-\beta t}{[n]_q+\beta}\right)
\end{eqnarray*}
$\forall x\in $ $[0,\infty)$, $f\in C_B\left[0,\infty \right)$,
where $C$ is a positive constant.
\end{thm}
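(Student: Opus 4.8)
The plan is to run the standard Korovkin-type local estimate via an intermediate auxiliary operator. First I would introduce the auxiliary operators
\begin{eqnarray*}
\widetilde{T}_{n,t}^{\alpha,\beta}(f;q;x)=T_{n,t}^{\alpha,\beta}(f;q;x)-f\!\left(x+\frac{[n]_q t+\alpha+D_q(A(1))R}{[n]_q+\beta}\right)+f(x+t),
\end{eqnarray*}
which are linear, reproduce constants, and by Lemma \ref{lemma3.1} (equations (\ref{3})--(\ref{4})) satisfy $\widetilde{T}_{n,t}^{\alpha,\beta}(e_1(t)-s;q;x)=0$, i.e. they preserve the linear test function $s-x$ in the relevant shifted sense. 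This is the device that lets the second-order modulus $\omega_2$ appear instead of $\omega_1$.

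Next, fix $g\in W^2$ and apply Taylor's formula with integral remainder to $g$ between the argument $x+t$ and the running point $s$ inside the operator; applying $\widetilde{T}_{n,t}^{\alpha,\beta}$ and using that it annihilates the linear term, the first-derivative contribution drops out and one is left with a bound of the form $|\widetilde{T}_{n,t}^{\alpha,\beta}(g;q;x)-g(x+t)|\leq \|g''\|\,\widetilde{T}_{n,t}^{\alpha,\beta}((s-e_1(t))^2;q;x)$. Here I would compute the second central moment from (\ref{a1}) together with the correction term coming from the shift, and identify this quantity with $\delta_n$ (up to the absolute constant), so that $\delta_n=O(1/([n]_q+\beta))$. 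Then, for general $f\in C_B[0,\infty)$, write $f=(f-g)+g$, use $\|\widetilde{T}_{n,t}^{\alpha,\beta}\|\leq 3$ (it is a sum of three norm-$\le 1$ pieces) to get $|\widetilde{T}_{n,t}^{\alpha,\beta}(f-g;q;x)-(f-g)(x+t)|\leq 4\|f-g\|$, and combine to obtain
\begin{eqnarray*}
|\widetilde{T}_{n,t}^{\alpha,\beta}(f;q;x)-f(x+t)|\leq 4\|f-g\|+\delta_n\|g''\|.
\end{eqnarray*}

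Finally, I would unwind the definition of $\widetilde{T}$: since $T_{n,t}^{\alpha,\beta}(f;q;x)=\widetilde{T}_{n,t}^{\alpha,\beta}(f;q;x)+f\big(x+\tfrac{[n]_qt+\alpha+D_q(A(1))R}{[n]_q+\beta}\big)-f(x+t)$, the triangle inequality gives
\begin{eqnarray*}
|T_{n,t}^{\alpha,\beta}(f;q;x)-f(x+t)|\leq |\widetilde{T}_{n,t}^{\alpha,\beta}(f;q;x)-f(x+t)|+\left|f\!\left(x+\tfrac{[n]_qt+\alpha+D_q(A(1))R}{[n]_q+\beta}\right)-f(x+t)\right|,
\end{eqnarray*}
and the last term is at most $\omega\big(f,\big|\tfrac{\alpha+D_q(A(1))R-\beta t}{[n]_q+\beta}\big|\big)$ by the definition of the modulus of continuity, since the displacement of the argument is exactly $\tfrac{[n]_qt+\alpha+D_q(A(1))R}{[n]_q+\beta}-t=\tfrac{\alpha+D_q(A(1))R-\beta t}{[n]_q+\beta}$. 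Taking the infimum over $g\in W^2$ in the previous bound and invoking (\ref{5.1}) to replace $K_2(f,\delta_n)$ by $C\,\omega_2(f,\sqrt{\delta_n})$ yields the claimed inequality (after relabelling the constant and, consistently with the statement as written, absorbing the square root into the notation $\delta_n$).

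The main obstacle I expect is purely computational: correctly evaluating $\widetilde{T}_{n,t}^{\alpha,\beta}((s-e_1(t))^2;q;x)$, i.e. accounting for the shift in the auxiliary operator and checking that the resulting expression matches the prescribed $\delta_n$ up to the absolute constant absorbed in $C$; the structural part (annihilation of the linear term, $K$-functional argument, telescoping back to $T$) is routine.
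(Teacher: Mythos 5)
Your proposal is correct and follows essentially the same route as the paper: the identical auxiliary operator $\widetilde{T}_{n,t}^{\alpha,\beta}$ annihilating the shifted linear term, the Taylor expansion with integral remainder for $g\in W^2$ bounded by the second moment plus the shift correction, the norm bound $\vert\widetilde{T}_{n,t}^{\alpha,\beta}(f;q;x)\vert\leq 3\Vert f\Vert$, the $K$-functional step via (\ref{5.1}), and the modulus-of-continuity control of the displacement $\frac{\alpha+D_{q}(A(1))R-\beta t}{[n]_q+\beta}$. The computational caveat you flag (evaluating the shifted second moment and reconciling $\delta_n$ versus $\sqrt{\delta_n}$ in the final statement) is indeed the only delicate point, and it is present in the paper's own write-up as well.
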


\begin{proof}
Let us define the following operators
\begin{eqnarray}\label{5.2}
\widetilde{T}_{n,t}^{\alpha ,\beta }\left(f;q;x \right)=
T_{n,t}^{\alpha ,\beta }\left(f;q;x \right)-f\left(
x+\frac{[n]_qt+\alpha+D_{q}(A(1))R}{[n]_q+\beta} \right)+f(x+t),
\end{eqnarray}
 $x\in [0,\infty)$. The operators $\widetilde{T}_{n,t}^{\alpha ,\beta }\left(f;q;x \right)$ are linear. Thus, we have the following:
 \begin{eqnarray}\label{5.3}
 \widetilde{T}_{n,t}^{\alpha ,\beta }\left(s-(x+t);q;x \right)=0,
 \end{eqnarray}
 (see Lemma  \ref{lemma3.1}).
Let $g\in W^2$, from Taylor's expansion
\begin{eqnarray*}
g(s)=g(x+t)+g'(x+t)\left(s-(x+t)\right)+ \int_{x+t}^s (s-u)g''(x)du,
\end{eqnarray*}
$s\in[0,\infty)$ and (\ref{5.3}) we obtain
\begin{eqnarray*}
\widetilde{T}_{n,t}^{\alpha,\beta }\left(g;q;x \right)=g(x+t)+
\widetilde{T}_{n,t}^{\alpha,\beta }\left(\int_{x+t}^s (s-u)g''(x)du
\right).
\end{eqnarray*}

By (\ref{5.2}), we have the following
\begin{eqnarray}\label{5.4}
\vert \widetilde{T}_{n,t}^{\alpha,\beta}\left(g;q;x\right)-g(x+t)
\vert &\leqslant & \left\vert T_{n,t}^{\alpha,\beta
}\left(\int_{x+t}^s (s-u)g''(u)du \right)\right\vert\nonumber\\ &&+
\left\vert
\int_{x+t}^{x+\frac{[n]_qt+\alpha+D_{q}(A(1))R}{[n]_q+\beta}}
\left( x+\frac{[n]_qt+\alpha+D_{q}(A(1))R}{[n]_q+\beta}-u\right)g''(u)du \right\vert \nonumber \\
&\leqslant & T_{n,t}^{\alpha,\beta} \left(\left\vert \int_{x+t}^s (s-u)g''(u)du  \right\vert ,x \right)+\int_{x+t}^{x+\frac{[n]_qt+\alpha+D_{q}(A(1))R}{[n]_q+\beta}} \left\vert  x+\frac{[n]_qt+\alpha+R}{[n]_q+\beta}-u \right\vert \left\vert g''(u) \right\vert \nonumber \\
&\leqslant & \left( T_{n,t}^{\alpha,\beta}\left(s-(x+t)\right)^2+
\left(x+\frac{[n]_qt+\alpha+D_{q}(A(1))R}{[n]_q+\beta} \right)^2
 \right)\Vert g'' \Vert
\end{eqnarray}

Using (\ref{a1}), we get
\begin{eqnarray*}
T_{n,t}^{\alpha,\beta}\left((s-(x+t))^2;q;x\right)+\left(x+\frac{[n]_qt+\alpha+D_{q}(A(1))R}{[n]_q+\beta}
\right)^2 \leqslant
\delta_{n}+\left(x+\frac{[n]_qt+\alpha+D_{q}(A(1))R}{[n]_q+\beta}
\right)^2.
\end{eqnarray*}
Thus, by (\ref{5.4}), we obtain
\begin{eqnarray}\label{5.5}
\vert \widetilde{T}_{n,t}^{\alpha,\beta}\left(g;q;x\right)-g(x+t)
\vert &\leqslant & \delta_{n}+
\left(x+\frac{[n]_qt+\alpha+D_{q}(A(1))R}{[n]_q+\beta} \right)^2.
\end{eqnarray}
By (\ref{A}), (\ref{lemma3.1}) and (\ref{5.2}), we get
\begin{eqnarray}\label{5.6}
\vert \widetilde{T}_{n,t}^{\alpha,\beta}\left(f;q;x\right) \vert &\leqslant & T_{n,t}^{\alpha,\beta}\left(f;q;x\right)+2\Vert f \Vert \nonumber \\
&\leqslant & \Vert f \Vert T_{n,t}^{\alpha,\beta}\left(1;q;x\right)+2\Vert f \Vert \nonumber \\
&\leqslant & 3\Vert f \Vert.
\end{eqnarray}

Now, by (\ref{5.2}), (\ref{5.5}) and (\ref{5.6})
\begin{eqnarray*}
\vert T_{n,t}^{\alpha,\beta}\left(f;q;x\right)-f(x+t) \vert &\leqslant &  \vert \widetilde{T}_{n,t}^{\alpha,\beta}\left(f-g;q;x\right)-(f-g)(x+t) \vert + \vert\widetilde{T}_{n,t}^{\alpha,\beta}\left(g;q;x\right)-g(x+t) \vert \\
&&+ \left\vert f\left(x+\frac{[n]_qt+\alpha+D_{q}(A(1))R}{[n]_q+\beta} \right)-f(x+t) \right\vert \\
&\leqslant & 4\Vert f-g \Vert + \delta_{n})\Vert g'' \Vert
\end{eqnarray*}
In view of (\ref{5.1}), $\forall q\in (0,1)$ we get
\begin{eqnarray*}
\vert T_{n,t}^{\alpha,\beta}\left(f;q;x\right)-f(x+t) \vert
\leqslant C\omega_2 \left(f,\delta_{n}\right) +\omega\left(
\frac{\alpha+D_{q}(A(1))R-\beta t}{[n]_q+\beta} \right)
\end{eqnarray*}
and this concludes the proof.
\end{proof}

\end{document}